\newcommand{\la}{\lambda}
\definecolor{darkgreen}{rgb}{0.,0.5,0.}
\numberwithin{equation}{section}
\newtheorem{theorem}{Theorem}[section]
\newtheorem{lemma}[theorem]{Lemma}
\newtheorem{cor}[theorem]{Corollary}
\newtheorem{ex}[theorem]{Example}
\title{On the largest sizes of certain simultaneous core partitions with distinct parts}
\author{Huan Xiong}
\address{Universit\'e de Strasbourg, CNRS, IRMA UMR 7501, F-67000 Strasbourg, France} \email{xiong@math.unistra.fr}
\subjclass[2010]{05A17, 11P81}
\keywords{partition; hook length; simultaneous core partition; largest size; distinct part}
\begin{document}
\begin{abstract} Motivated by Amdeberhan's conjecture on $(t,t+1)$-core partitions with distinct parts,  various results on the numbers, the largest sizes and the average sizes of simultaneous core partitions with distinct parts were obtained by many mathematicians recently. In this paper, we derive the largest sizes of $(t,mt\pm 1)$-core partitions with distinct parts, which verifies a generalization of  Amdeberhan's conjecture. We also prove that the numbers of such partitions with the largest sizes are at most $2$.
\end{abstract}

 \maketitle

\section{Introduction} Recall that a \emph{partition} is a finite weakly decreasing sequence of positive integers
$\lambda = (\lambda_1, \lambda_2, \ldots, \lambda_\ell)$ where 
 $\la_i\ (1\leq i\leq \ell)$ are called the \emph{parts} and $\sum_{1\leq i\leq \ell}\lambda_i$  the
\emph{size} of $\lambda$ (see \cite{Macdonald,ec2}). We  associate a partition $\lambda$ with its \emph{Young
diagram}, which is an array of boxes arranged in left-justified
rows with $\lambda_i$ boxes in the $i$-th row. For the $(i,j)$-box in
the $i$-th row and $j$-th column in the Young diagram, its \emph{hook length} $h(i, j)$ is defined to be
the number of boxes directly to the right, and directly below, including the
box itself. Let $t$ be a positive integer.  A partition $\lambda$ is
called a \emph{$t$-core partition} if none of its hook lengths is a
multiple of $t$. Furthermore,  $\lambda$ is called a
\emph{$(t_1,t_2,\ldots, t_m)$}-core partition if it is
simultaneously  a $t_1$-core, a $t_2$-core, $\ldots$, a $t_m$-core
partition (see \cite{tamd, KN}). For example, the Young diagram and hook
lengths of the partition $(7,2,1)$ are given in Figure \ref{fig:1}. Therefore it is a $(6, 8)$-core partition since none of its
hook lengths is divisible by $6$ or $8$.

\begin{figure}[htbp]
\begin{center}
\Yvcentermath1

\begin{tabular}{c}
$\young(9754321,31,1)$

\end{tabular}

\end{center}
\caption{The Young diagram and hook lengths of the partition
$(7,2,1)$.} \label{fig:1}
\end{figure}

Simultaneous core partitions have been widely studied in the past fifteen years (see \cite{tamd1,   AHJ, CHW, ford, PJ,NS, N3, ols, SZ,Wang,Xiong1,  YZZ}) since Anderson's work \cite{and}, who showed that the number of $(t_1,t_2)$-core
partitions is equal to $  (t_1+t_2-1)!/(t_1!\, t_2!)$, where $t_1$
and $t_2$ are coprime to each other.  Olsson and Stanton
\cite{ols} proved that the largest size of $(t_1,t_2)$-core
partitions is $    {(t_1^2-1)(t_2^2-1)}/{24}$. The average size of such partitions is conjectured to be $  {(t_1-1)(t_2-1)(t_1+t_2+1)}/{24}$ by Armstrong \cite{AHJ} and first proved by Johnson \cite{PJ}.

The problem of evaluating the number of simultaneous core partitions with distinct parts was raised by Amdeberhan \cite{tamd}. He conjectured that the number of $(t,t+1)$-core partitions with distinct parts is equal to the $(t+1)$-th Fibonacci number.  Amdeberhan  also made several conjectures concerning the  largest size and the average size of such partitions. Amdeberhan's conjectures were  proved independently by Straub \cite{Straub}, Nath-Sellers \cite{NS}, Zaleski \cite{Za}, and the author \cite{Xiong} recently. Zaleski \cite{Za} also computed the explicit formulas for the moments of the sizes of these partitions. Straub \cite{Straub} gave several conjectures on
the number and the largest size of $(t,t+2)$-core partitions with distinct parts, which were first proved by Yan, Qin, Jin and Zhou \cite{YQJZ}. Later, Baek, Nam and Yu \cite{BNY} gave a bijective proof for the number of such partitions.  The explicit formulas for the
 moments of
the sizes of these partitions were obtained by Zaleski and Zeilberger \cite{ZZ}.

Recently,  Straub \cite{Straub} and Nath-Sellers \cite{NS2}  derived closed formulas for the numbers of $(t, mt-1)$ and $(t, mt+1)$-core partitions with distinct parts respectively. Therefore it is natural to try to find the largest sizes and average sizes of these two kinds of simultaneous core partitions. For the average size of $(t, mt-1)$-core partitions with distinct parts, Zaleski conjectured  explicit formulas in \cite{Za2}.

In this paper, we obtain the largest sizes of $(t, mt+1)$ and $(t, mt-1)$-core partitions with distinct parts, and determine the numbers of such partitions with the largest sizes. For simplicity, let 
\begin{align}
\displaystyle \alpha_{m,t}(x):=\frac{mt+t+x}{2(m+2)}
\end{align}
for any $m,t\in \mathbb{N}$ and $x\in \mathbb{Z}$.
We will prove the following result in Section \ref{sec:mt+1}.

\begin{theorem} \label{th:main}
Let $t\geq 2$ and $m\geq 1$ be two given positive integers.
The largest size of $(t, mt + 1)$-core partitions with distinct parts is 
\begin{align*}
\displaystyle 
\begin{cases} -\lfloor   \alpha_{m,t}(1)\rfloor ^2\cdot  (m^2+2m)/{2}\, + \, \lfloor   \alpha_{m,t}(1)\rfloor \cdot  (m^2t+mt+m)/{2}, \quad &\ \text{if}\  \{  \alpha_{m,t}(1)\}\leq  1/2; \\ -\lfloor   \alpha_{m,t}(1)+1\rfloor ^2\cdot  (m^2+2m)/{2}\, + \, \lfloor   \alpha_{m,t}(1)+1\rfloor \cdot  (m^2t+mt+m)/{2}, \quad &\ \text{if}\  \{  \alpha_{m,t}(1)\}>  1/2,
\end{cases}
\end{align*}
where  $\lfloor x\rfloor $ denotes the largest integer not greater than the real number $x$, and $\{x\}=x-\lfloor x\rfloor $ denotes the fractional part of $x$.
The number of such partitions with the largest size is  
\begin{align*}
\displaystyle 
\begin{cases} 
2, \quad & \text{if}\  \{  \alpha_{m,t}(1)\}=  1/2, \\ 1, \quad & \text{if}\  \{  \alpha_{m,t}(1)\}\neq   1/2.
\end{cases}
\end{align*}
\end{theorem}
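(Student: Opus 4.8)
The plan is to translate everything into the language of first-column hook lengths and then solve a clean integer optimisation. Writing $B=\{\lambda_i+\ell-i:1\le i\le \ell\}$ for the set of first-column hook lengths of $\lambda$ (with $\ell$ the number of parts), one has $|\lambda|=\sum_{h\in B}h-\binom{|B|}{2}$. First I would record the standard $\beta$-set dictionary: $\lambda$ is a $t$-core iff $B$ is closed under subtracting $t$, so that within each residue class $r\in\{1,\dots,t-1\}$ modulo $t$ the elements of $B$ form an initial segment $r,r+t,\dots,r+(n_r-1)t$ and no multiple of $t$ lies in $B$; moreover $\lambda$ has distinct parts iff no two elements of $B$ differ by $1$, and $\lambda$ is an $(mt+1)$-core iff $B$ is closed under subtracting $mt+1$. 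The crucial first step is to show that these three conditions are equivalent to a description purely in terms of the counts $n_1,\dots,n_{t-1}$: no two adjacent residue classes are simultaneously nonempty (this is exactly the distinct-parts condition, since two nonempty adjacent classes both contain their minimal elements $r$ and $r+1$), together with $0\le n_r\le m$ for every $r$ (subtracting $mt+1$ sends class $r$ to class $r-1$ with the index shifted down by $m$, so $n_r>m$ would force the necessarily-empty neighbour $r-1$ to be nonempty).

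With the feasible region identified, the size is $F(n_1,\dots,n_{t-1})=\sum_{r=1}^{t-1}(n_r r+t\binom{n_r}{2})-\binom{N}{2}$ with $N=\sum_r n_r$. The observation that makes the optimisation tractable is that, with all other coordinates fixed, $F$ is a \emph{convex} quadratic in each single $n_r$: the coefficient of $n_r^2$ is $\tfrac{t}{2}-\tfrac12=\tfrac{t-1}{2}>0$ for $t\ge 2$. Hence at a global maximum each $n_r$ sits at an endpoint of its range, i.e. $n_r\in\{0,m\}$, so every class is either empty or completely full. Writing $S$ for the set of full classes, $S$ is an independent set in the path $1-2-\cdots-(t-1)$, and for a fixed size $|S|=k$ the only $S$-dependent part of $F$ is $m\sum_{r\in S}r$. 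Maximising $\sum_{r\in S}r$ over independent $k$-subsets is solved uniquely by the top-justified set $S=\{t-1,t-3,\dots,t-2k+1\}$ (a short exchange argument, using that the weight of a vertex increases with $r$), giving $\sum_{r\in S}r=k(t-k)$.

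Substituting back, the best configuration with exactly $k$ full classes has size $\tfrac{km}{2}(t(m+1)+1-k(m+2))$, a downward parabola in $k$ with vertex at $k=\tfrac{t(m+1)+1}{2(m+2)}=\alpha_{m,t}(1)$. Its integer maximiser is the nearest integer to $\alpha_{m,t}(1)$, namely $\lfloor\alpha_{m,t}(1)\rfloor$ when $\{\alpha_{m,t}(1)\}\le\tfrac12$ and $\lfloor\alpha_{m,t}(1)\rfloor+1=\lfloor\alpha_{m,t}(1)+1\rfloor$ when $\{\alpha_{m,t}(1)\}>\tfrac12$; since $\tfrac{km}{2}(t(m+1)+1-k(m+2))=\tfrac{k}{2}(m^2t+mt+m)-\tfrac{k^2}{2}(m^2+2m)$, this reproduces the two displayed formulas exactly. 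For the count, because the optimal configuration is forced to be the unique top-justified full configuration for its value of $k$, the number of largest partitions equals the number of integers $k$ attaining the parabola's maximum; this is $2$ precisely when the vertex is a half-integer, i.e. $\{\alpha_{m,t}(1)\}=\tfrac12$ (the two nearest integers tie), and $1$ otherwise.

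The hard part will be the reduction in the first paragraph: getting the three hook-length conditions exactly right and proving that, for distinct parts, the $(mt+1)$-core condition collapses to the single cap $n_r\le m$. One must also verify feasibility, that the optimal $k$ never exceeds the maximal independent-set size $\lfloor t/2\rfloor$, which follows from $\alpha_{m,t}(1)<t/2$ (equivalent to $t>1$), together with the uniqueness of the top-justified maximiser needed for the count. Once the feasible region is correctly pinned down, the convexity observation reduces the entire problem to a one-variable parabola, so the remaining steps are routine.
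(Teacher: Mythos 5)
Your proposal is correct and follows essentially the same route as the paper: the $\beta$-set bijection onto sequences $(n_1,\dots,n_{t-1})$ with $0\le n_i\le m$ and no two adjacent nonzero entries, the convexity-in-each-coordinate argument forcing $n_i\in\{0,m\}$, the top-justified independent set $\{t-1,t-3,\dots\}$, and the one-variable parabola with vertex $\alpha_{m,t}(1)$. The only stylistic difference is that the paper verifies the range constraint $r\le\lfloor t/2\rfloor$ by splitting into the cases $t$ odd with $t\le m+3$ versus the rest, whereas you compress this into the single inequality $\alpha_{m,t}(1)<t/2$, which does suffice after the small observation that when $\lfloor\alpha_{m,t}(1)\rfloor=\frac{t-1}{2}$ one automatically has $\{\alpha_{m,t}(1)\}<\frac12$.
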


\begin{ex}
$(1)$ Let $t=7$, $m=3$. Then  $\{  \alpha_{m,t}(1)\}=\{  {29}/{10}\}=  {9}/{10}>  1/2$. For the $(7, 22)$-core partitions with distinct parts,   the largest size is $63$, and the number of such partitions with the largest size is $1$ by Theorem \ref{th:main}. Actually we can check that $(12,11,10,8,7,6,4,3,2)$ is the only such partition with the largest size $63$.  

$(2)$ Let $t=10$, $m=1$. Then  $\{  \alpha_{m,t}(1)\}=\{  {7}/{2}\}=  1/2$. For the $(10, 11)$-core partitions with distinct parts,  the largest size is $18$, and the number of such partitions with the largest size is $2$ by Theorem~\ref{th:main}. In fact, $(7,6,5)$ and $(6,5,4,3)$ are the only two such partitions with the largest size $18$.
\end{ex}

It is surprising that  the case for $(t, mt-1)$-core partitions with distinct parts is much more complicated than $(t, mt+1)$-core partition case.   We obtain the following result in Section \ref{sec:mt-1}. Here we assume $m\geq 2$ in Theorem  \ref{th:main*} since $m=1$ case can be implied by Theorem \ref{th:main}.
\begin{theorem} \label{th:main*}
Let $t\geq 2$ and $m\geq 2$ be two given positive integers. Define two functions $F$ and $G$ as follows:
\begin{align*}
\displaystyle 
F(x)=-x^2\cdot  (m^2+2m)/{2}+x\cdot  (m^2t+mt+3m)/{2}-mt 
\end{align*}
and
\begin{align*}
\displaystyle 
G(x)=-x^2\cdot  (m^2+2m)/{2}+x\cdot  (m^2t+mt-m)/{2}.
\end{align*}

$(1)$ When (i) $t \equiv 1 \pmod 2$ and $t> m+1$; or (ii) $t \equiv 0 \pmod 2$ and $t> 2m+3$, the largest size of $(t, mt - 1)$-core partitions with distinct parts is 
\begin{align*}
\displaystyle 
\begin{cases} \max \left(  F(\lfloor   \alpha_{m,t}(3)\rfloor ), G(\lfloor   \alpha_{m,t}(-1)\rfloor ) \right), \quad & \text{if}\  \{  \alpha_{m,t}(3)\}\leq  1/2 \ \text{and}\  \{  \alpha_{m,t}(-1)\}\leq  1/2; \\ \max \left(  F(\lfloor   \alpha_{m,t}(3)\rfloor ), G(\lfloor   \alpha_{m,t}(-1)\rfloor +1) \right), \quad & \text{if}\  \{  \alpha_{m,t}(3)\}\leq  1/2 \ \text{and}\  \{  \alpha_{m,t}(-1)\}>  1/2; \\
\max \left(  F(\lfloor   \alpha_{m,t}(3)\rfloor +1), G(\lfloor   \alpha_{m,t}(-1)\rfloor ) \right), \quad & \text{if}\  \{  \alpha_{m,t}(3)\}>  1/2 \ \text{and}\  \{  \alpha_{m,t}(-1)\}\leq  1/2;
\\
\max \left(  F(\lfloor   \alpha_{m,t}(3)\rfloor +1), G(\lfloor   \alpha_{m,t}(-1)\rfloor +1) \right), \quad & \text{if}\  \{  \alpha_{m,t}(3)\}>  1/2 \ \text{and}\  \{  \alpha_{m,t}(-1)\}>  1/2,
\end{cases}
\end{align*}
where $\max(x,\ y)$ denotes the maximal
number in $x$ and $y$.

$(2)$ When (iii) $t \equiv 1 \pmod 2$ and $t\leq m+1$; or (iv) $t \equiv 0 \pmod 2$ and $t\leq 2m+3$, the largest size of $(t, mt - 1)$-core partitions with distinct parts is 
\begin{align*}
\begin{cases} \max \left(  F(\lfloor   \alpha_{m,t}(3)\rfloor ), G(\lfloor \displaystyle   \frac{t-1}{2}\rfloor ) \right), \quad &\ \text{if}\  \{  \alpha_{m,t}(3)\}\leq  1/2; \\ \max \left(  F(\lfloor   \alpha_{m,t}(3)\rfloor +1), G(\lfloor \displaystyle   \frac{t-1}{2}\rfloor ) \right), \quad &\ \text{if}\  \{  \alpha_{m,t}(3)\}>  1/2.
\end{cases}
\end{align*}

$(3)$ The number of $(t, mt - 1)$-core partitions with distinct parts which have the largest size is  at most $2$.
\end{theorem}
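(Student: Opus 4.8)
The plan is to adapt the beta-set method used for Theorem~\ref{th:main} in Section~\ref{sec:mt+1} to the more delicate $mt-1$ case. I would first encode a partition $\la$ by its beta-set $B$ (the set of first-column hook lengths) and use three standard facts: $\la$ is a $t$-core iff $B$ is closed under subtracting $t$; $\la$ is an $(mt-1)$-core iff $B$ is closed under subtracting $mt-1$; and $\la$ has distinct parts iff $B$ contains no two consecutive integers. Together these confine $B$ to a bounded region of $\mathbb{Z}_{\ge 0}$, so that the largest-size problem becomes a finite optimization, with $|\la|=\sum_{b\in B}b-\binom{|B|}{2}$ an explicit function of the chosen beta-numbers.

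Next I would show that any size-maximizing $B$ has a rigid, almost-periodic (staircase) shape governed by a single integer parameter $x$, and that exactly two combinatorial types occur, with sizes $F(x)$ and $G(x)$ respectively. As a consistency check one has the identity $F(x)-G(x)=m(2x-t)$, and a short computation shows that the two downward parabolas $F$ and $G$ have vertices precisely at $\alpha_{m,t}(3)$ and $\alpha_{m,t}(-1)$. Maximizing each parabola over the integers therefore amounts to rounding its vertex to the nearest integer, which is $\lfloor\alpha\rfloor$ when $\{\alpha\}\le 1/2$ and $\lfloor\alpha\rfloor+1$ when $\{\alpha\}>1/2$; taking the larger of the two family optima produces the four-case formula of part~(1).

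The main obstacle is part~(2), the small-$t$ regime, where feasibility constraints override the unconstrained optimum. Here the parameter for the $G$-family cannot exceed a bound of order $(t-1)/2$, reflecting how many pairwise non-adjacent beta-numbers fit below $t$, and when the vertex $\alpha_{m,t}(-1)$ passes this bound the maximum of $G$ is forced to the endpoint $\lfloor (t-1)/2\rfloor$. I would pin down the thresholds by a direct inequality: $\alpha_{m,t}(-1)\ge (t-1)/2$ is equivalent to $t\le m+1$, which gives the odd case~(iii), and the analogous comparison against the even-$t$ feasibility bound yields $t\le 2m+3$ for case~(iv). One must also verify that the $F$-optimum $\alpha_{m,t}(3)$ always remains interior, so that $F$ is never replaced by a boundary value; this is why only $G$ acquires the endpoint $\lfloor (t-1)/2\rfloor$ in part~(2).

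Finally, for part~(3) I would first check that distinct admissible parameter values give distinct partitions, so the number of maximizers equals the number of optimal (type, parameter) pairs. A single family contributes two equal maxima exactly when its vertex has fractional part $1/2$ (the equidistant rounding case), and otherwise at most one. The remaining danger --- and the genuinely delicate point --- is that the $F$-family and the $G$-family could attain the common maximum simultaneously, possibly together with such a rounding tie, producing three or more extremal partitions. I expect ruling this out to be the crux: one needs an inequality comparing the two family optima and showing that a tie between them is incompatible with a rounding tie ($\{\alpha\}=1/2$) on the relevant side, so that the total count can never exceed $2$.
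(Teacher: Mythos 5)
Your proposal follows essentially the same route as the paper's proof: the $\beta$-set reduction, the classification of maximizers into two one-parameter families whose sizes are the downward parabolas $F$ and $G$ with vertices $\alpha_{m,t}(3)$ and $\alpha_{m,t}(-1)$, the endpoint analysis for $G$ with the correctly derived thresholds $t\leq m+1$ and $t\leq 2m+3$, and the identity $F(x)-G(x)=m(2x-t)$, which is exactly the tool the paper uses in its final step to rule out three simultaneous maximizers. The only part you leave as an assertion rather than an argument is the reduction to the two rigid families (the paper's exchange/convexity argument showing each residue class is either empty or filled to its maximum, with the support an alternating block anchored at the top), but the plan is correct and matches the paper's.
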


\begin{ex}
$(1)$ Let $t=8$, $m=3$. Then $t \equiv 0 \pmod 2$ and $t\leq 2m+3$. Also we have $\{  \alpha_{m,t}(3)\}=\{  {35}/{10}\}=  1/2$. For the $(8, 23)$-core partitions with distinct parts,   the largest size is $\max( F(3), G(3) )=72$ by Theorem \ref{th:main*}. In fact, we can check that $(14,13,12,9,8,7,4,3,2)$ is the only such partition with the largest size $72$.  

$(2)$ Let $t=7$, $m=2$. Then $t \equiv 1 \pmod 2$ and $t> m+1$. Also we have $\{  \alpha_{m,t}(3)\}=\{  {24}/{8}\}=0<  1/2$ and $\{  \alpha_{m,t}(-1)\}=\{  {20}/{8}\}=  1/2$. For the $(7, 13)$-core partitions with distinct parts,   the largest size is $\max( F(3), G(2) )=24$ by Theorem \ref{th:main*}. Actually  $(7,6,5,3,2,1)$ and $(9,8,4,3)$ are the only two such partitions with the largest size $24$.
\end{ex}

The case $m=1$ in Theorem \ref{th:main} implies the following Amdeberhan's conjecture on the largest size of $(t, t + 1)$-core partitions with distinct parts.

\begin{cor}[see Conjecture 11.9 of  \cite{tamd}]\label{main} 
Let $t\geq 2$ be a positive integer.  Then the largest size of $(t, t + 1)$-core partitions with distinct parts is 
$\lfloor   t(t+1)/6\rfloor $.
The number of such partitions with the largest size is $2$ if $t  \equiv 1 \ (\text{mod}\ 3)$ and $1$ otherwise.
\end{cor}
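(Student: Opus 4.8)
We derive Corollary~\ref{main} as the specialization $m=1$ of Theorem~\ref{th:main}. First I would substitute $m=1$ into the defining quantity, giving $\alpha_{1,t}(1)=\frac{t+t+1}{2(1+2)}=\frac{2t+1}{6}$. The whole argument then reduces to two computations: identifying the stated largest-size formula with $\lfloor t(t+1)/6\rfloor$, and translating the fractional-part dichotomy $\{\alpha_{1,t}(1)\}=1/2$ versus $\neq 1/2$ into the congruence condition $t\equiv 1\pmod 3$. I would organize the proof around the residue of $t$ modulo $3$, since $\alpha_{1,t}(1)=\frac{2t+1}{6}$ has denominator $6$ and its behavior is governed by $t\bmod 3$ (the numerator $2t+1$ is always odd, so the parity interaction with the $6$ is what produces the three cases).

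First I would handle the size formula. With $m=1$ the two branch-formulas in Theorem~\ref{th:main} become $-n^2\cdot\frac{3}{2}+n\cdot\frac{3t+1}{2}$ evaluated at $n=\lfloor\alpha_{1,t}(1)\rfloor$ or $n=\lfloor\alpha_{1,t}(1)+1\rfloor=\lfloor\alpha_{1,t}(1)\rfloor+1$. I would split into the three cases $t\equiv 0,1,2\pmod 3$. For each I compute $\lfloor\frac{2t+1}{6}\rfloor$ and $\{\frac{2t+1}{6}\}$ explicitly (writing $t=3k,3k+1,3k+2$), select the correct branch according to whether the fractional part is $\le 1/2$ or $>1/2$, plug the resulting integer $n$ into $-\frac{3}{2}n^2+\frac{3t+1}{2}n$, and check directly that the outcome equals $\lfloor t(t+1)/6\rfloor$. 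This is a finite, mechanical verification — three residue classes, each a short polynomial-in-$k$ identity — so I would only display one representative case in full and indicate that the others are analogous.

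For the count, I would show $\{\alpha_{1,t}(1)\}=1/2\iff t\equiv 1\pmod 3$. Since $\alpha_{1,t}(1)=\frac{2t+1}{6}$, the fractional part equals $1/2=\frac{3}{6}$ exactly when $2t+1\equiv 3\pmod 6$, i.e. $2t\equiv 2\pmod 6$, i.e. $t\equiv 1\pmod 3$ (using that $t$ and $t+3$ give the same residue and checking $t\equiv 1$ against $t\equiv 0,2$). Theorem~\ref{th:main} then gives the count $2$ precisely in this case and $1$ otherwise, which is exactly the claimed statement. I would note in passing that Example~(2) above, with $t=10\equiv 1\pmod 3$, is consistent: there the count is indeed $2$.

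The only mildly delicate point — the \emph{main obstacle} such as it is — is the bookkeeping at the boundary $\{\alpha_{1,t}(1)\}=1/2$, where one must confirm that both branch-formulas of Theorem~\ref{th:main} yield the \emph{same} size (so the largest size is well defined and equals $\lfloor t(t+1)/6\rfloor$ regardless of which branch one nominally selects), consistent with there being two extremal partitions in that case. This is immediate once one observes that at a half-integer $\alpha_{1,t}(1)$ the quadratic $-\frac{3}{2}n^2+\frac{3t+1}{2}n$ takes equal values at the two consecutive integers $\lfloor\alpha_{1,t}(1)\rfloor$ and $\lfloor\alpha_{1,t}(1)\rfloor+1$ straddling its vertex, because its axis of symmetry is exactly $\frac{3t+1}{6}=\alpha_{1,t}(1)$. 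Beyond this symmetry remark, the corollary is a direct and routine specialization of Theorem~\ref{th:main}.
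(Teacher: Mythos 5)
Your overall route is exactly the paper's: the paper gives no separate proof of Corollary~\ref{main}, simply asserting that it is the case $m=1$ of Theorem~\ref{th:main}, and your plan (compute $\alpha_{1,t}(1)=\frac{2t+1}{6}$, split on $t\bmod 3$, verify the size formula equals $\lfloor t(t+1)/6\rfloor$, and translate $\{\alpha_{1,t}(1)\}=1/2$ into $t\equiv 1\pmod 3$) is the right and complete way to carry that specialization out. However, there is a concrete arithmetic slip that would make your verification fail as literally written: at $m=1$ the coefficient $(m^2t+mt+m)/2$ equals $(2t+1)/2$, not $(3t+1)/2$, so the quadratic to optimize is $-\tfrac{3}{2}n^2+\tfrac{2t+1}{2}n$. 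With your coefficient the check breaks already at $t=3$, $n=1$: $-\tfrac32+\tfrac{10}{2}=\tfrac72$, which is not even an integer, let alone $\lfloor 12/6\rfloor=2$. The same slip propagates to your symmetry remark: the identity ``axis of symmetry $=\frac{3t+1}{6}=\alpha_{1,t}(1)$'' is false, since $\alpha_{1,t}(1)=\frac{2t+1}{6}$; the correct axis is $\frac{(2t+1)/2}{3}=\frac{2t+1}{6}$, which does coincide with $\alpha_{1,t}(1)$ (consistent with the completed-square form in Step~3 of the paper's proof of Theorem~\ref{th:main}). Once the coefficient is corrected, all three residue-class computations go through and yield $\lfloor t(t+1)/6\rfloor$, and your derivation of the equivalence $\{\alpha_{1,t}(1)\}=1/2\iff t\equiv 1\pmod 3$ is correct as stated.
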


\section{The $\beta$-sets of core partitions}
The \emph{$\beta$-set} of a partition $\lambda = (\lambda_1, \lambda_2, \ldots, \lambda_\ell)$ is denoted by
$$\beta(\lambda)=\{\lambda_i+\ell-i : 1 \leq i \leq \ell\}.$$
In fact, $\beta(\lambda)$
 is equal to the set of
hook lengths of boxes in the first column of the corresponding Young
diagram of $\la$ (see \cite{ols,Xiong1}). It is easy to see that a partition~$\lambda$ is uniquely
determined by its $\beta$-set $\beta(\lambda)$.  The following results are well-known.

\begin{lemma}[ \cite{and, berge,ols,Xiong1,Xiong}]\label{betaset}

{\em (1)}  The size of a partition
$\lambda$ is  determined by its $\beta$-set as following:
\begin{align}\label{eq:2.1}
| \lambda |=\sum_{x\in
\beta(\lambda)}{x}-\binom{ |\beta(\lambda)| }{2}.
\end{align}

{\em (2)} {\em (The abacus condition for t-core partitions.)}
A partition $\lambda$ is a $t$-core partition if and only if for any
$x\in \beta(\lambda)$ with $x\geq t$,
we have $x-t \in \beta(\lambda)$.
\end{lemma}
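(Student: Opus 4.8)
The plan is to prove the two parts separately: part~(1) by a direct computation, and part~(2) by first establishing the classical description of \emph{all} hook lengths of $\lambda$ in terms of its $\beta$-set and then running a short divisibility argument. For part~(1), I would first record that the values $b_i:=\lambda_i+\ell-i$ are strictly decreasing in $i$, since $b_i-b_{i+1}=\lambda_i-\lambda_{i+1}+1\ge 1$; hence they are distinct and $|\beta(\lambda)|=\ell$. Summing then gives
\[
\sum_{x\in\beta(\lambda)}x=\sum_{i=1}^{\ell}(\lambda_i+\ell-i)=|\lambda|+\binom{\ell}{2},
\]
and substituting $\ell=|\beta(\lambda)|$ yields \eqref{eq:2.1}. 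This part is routine.

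The heart of the argument is a \emph{key lemma}: writing $B=\beta(\lambda)$ and $A=\mathbb{Z}_{\ge 0}\setminus B$ for the set of ``gaps'', the hook lengths occurring in row $i$ are exactly $\{b_i-a:a\in A,\ 0\le a<b_i\}$, where $b_i$ is the first-column hook length $h(i,1)=\lambda_i+\ell-i\in B$. To prove this I would pass to the bead--gap (Maya/abacus) encoding, setting $d_i=\lambda_i-i=b_i-\ell$, and use the standard conjugation duality that an integer $q$ is a gap of $\lambda$ if and only if $-1-q$ is a bead of the conjugate partition $\lambda'$. Using $h(i,j)=(\lambda_i-j)+(\lambda'_j-i)+1$ with $\lambda'_j=\#\{k:\lambda_k\ge j\}$, a direct computation gives $d_i-h(i,j)=j-1-\lambda'_j$, whose reflection $-1-(d_i-h(i,j))=\lambda'_j-j$ is a bead of $\lambda'$; hence $d_i-h(i,j)$ is a gap of $\lambda$, and after shifting by $+\ell$ the quantity $a:=b_i-h(i,j)$ lies in $A$ with $0\le a<b_i$. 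Since $h(i,j)-h(i,j+1)=1+(\lambda'_j-\lambda'_{j+1})\ge 1$, the hook lengths of row $i$ are distinct, so $j\mapsto b_i-h(i,j)$ is injective; and a count shows $\#\{a\in A:0\le a<b_i\}=b_i-(\ell-i)=\lambda_i$, which equals the number of boxes in row $i$. An injection between finite sets of equal size is a bijection, which establishes the key lemma. I expect this step to be the main obstacle, since it requires setting up the abacus correspondence and the conjugation duality carefully; the rest is formal.

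Granting the key lemma, part~(2) follows quickly. For the forward implication, suppose $\lambda$ is a $t$-core and $x\in B$ with $x\ge t$. If $x-t\notin B$, then $x-t\in A$ with $0\le x-t<x=b_i$ for the row $i$ with $b_i=x$, so by the key lemma row $i$ contains a box of hook length $b_i-(x-t)=t$, contradicting the $t$-core hypothesis; hence $x-t\in B$. For the converse, assume that $x-t\in B$ whenever $x\in B$ and $x\ge t$, and suppose for contradiction that some box had hook length $kt$ with $k\ge 1$. By the key lemma $kt=b_i-a$ for some $a\in A$ with $0\le a<b_i$, so $a=b_i-kt$ and $b_i=a+kt\ge t$. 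Each of the terms $b_i,b_i-t,\dots,b_i-(k-1)t$ is at least $a+t\ge t$, so repeatedly applying the hypothesis forces $b_i-t,b_i-2t,\dots,b_i-kt=a$ all to lie in $B$, contradicting $a\in A$. Thus no hook length is divisible by $t$, so $\lambda$ is a $t$-core, completing the proof.
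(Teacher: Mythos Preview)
Your proof is correct in both parts. Note, however, that the paper does not supply its own proof of this lemma: it is stated as a well-known result with citations to \cite{and,berge,ols,Xiong1,Xiong}, so there is no in-paper argument to compare against.

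A small remark on efficiency: in your key lemma you invoke the Maya-diagram conjugation duality to show that $a=b_i-h(i,j)$ is a gap. This works, but it is more machinery than needed. One can verify directly that $a=\ell+j-1-\lambda'_j\notin B$: if $a=b_k=\lambda_k+\ell-k$ for some $k$, then $j-1-\lambda'_j=\lambda_k-k$; but $\lambda_k\ge j$ forces $\lambda'_j\ge k$ and hence $j-1-\lambda'_j<\lambda_k-k$, while $\lambda_k<j$ forces $\lambda'_j\le k-1$ and hence $j-1-\lambda'_j>\lambda_k-k$, a contradiction either way. This avoids appealing to conjugation altogether. Your counting argument ($\#\{a\in A:0\le a<b_i\}=b_i-(\ell-i)=\lambda_i$) and the deduction of part~(2) from the key lemma are clean and exactly as one would want.
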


The following result is obvious by the definition of $\beta$-sets.
\begin{lemma} \label{distinct}
The partition  $\lambda$ is a partition with distinct parts if and only if there does not exist $x,y\in \beta(\lambda)$ with $x-y=1.$
\end{lemma}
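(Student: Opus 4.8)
The plan is to translate the condition on parts directly into a condition on consecutive differences of $\beta$-set elements. Write $\lambda=(\lambda_1,\lambda_2,\ldots,\lambda_\ell)$ with $\lambda_1\geq\lambda_2\geq\cdots\geq\lambda_\ell\geq 1$, and set $b_i=\lambda_i+\ell-i$ for $1\leq i\leq \ell$, so that $\beta(\lambda)=\{b_1,\ldots,b_\ell\}$. The first step is to record the elementary identity
\begin{align*}
b_i-b_{i+1}=(\lambda_i+\ell-i)-(\lambda_{i+1}+\ell-i-1)=(\lambda_i-\lambda_{i+1})+1,
\end{align*}
valid for all $1\leq i\leq \ell-1$. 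Since $\lambda_i\geq\lambda_{i+1}$, this already gives $b_i-b_{i+1}\geq 1$, confirming that $b_1>b_2>\cdots>b_\ell$; in particular the elements of $\beta(\lambda)$ are listed in strictly decreasing order.

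For the forward direction I would assume $\lambda$ has distinct parts, i.e. $\lambda_i>\lambda_{i+1}$ for every $i$, so that $\lambda_i-\lambda_{i+1}\geq 1$ and hence $b_i-b_{i+1}\geq 2$ for each consecutive pair. To rule out \emph{any} two elements differing by $1$, and not merely consecutive ones, I would take arbitrary $x,y\in\beta(\lambda)$ with $x>y$, write $x=b_i$ and $y=b_j$ with $i<j$, and telescope:
\begin{align*}
x-y=\sum_{k=i}^{j-1}(b_k-b_{k+1})\geq 2(j-i)\geq 2>1,
\end{align*}
so no pair with $x-y=1$ can occur. The reverse direction is the contrapositive: if $\lambda$ does not have distinct parts, some index $i$ satisfies $\lambda_i=\lambda_{i+1}$, and the displayed identity then yields $b_i-b_{i+1}=1$, exhibiting $x=b_i$ and $y=b_{i+1}$ in $\beta(\lambda)$ with $x-y=1$.

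There is no substantive obstacle here, as the statement is essentially a reformulation of the definition of $\beta(\lambda)$; the only point requiring a moment of care is passing from consecutive differences to arbitrary pairs, which is exactly what the telescoping estimate above secures (equivalently, one notes that two elements differing by $1$ can have no integer strictly between them, forcing them to be adjacent in the decreasing list). I would present the argument as the short chain of equivalences $\lambda\text{ has distinct parts}\iff \lambda_i>\lambda_{i+1}\ \forall i\iff b_i-b_{i+1}\geq 2\ \forall i\iff \nexists\, x,y\in\beta(\lambda)\text{ with }x-y=1$, which makes the biconditional transparent.
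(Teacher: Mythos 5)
Your argument is correct and complete; the paper itself gives no proof, simply declaring the lemma ``obvious by the definition of $\beta$-sets,'' and your chain of equivalences $\lambda_i>\lambda_{i+1}\ \forall i\iff b_i-b_{i+1}\geq 2\ \forall i$ together with the telescoping step is exactly the standard justification the author has in mind. The one point genuinely worth writing down --- that two elements of $\beta(\lambda)$ differing by $1$ must be adjacent in the decreasing list --- is handled properly, so nothing is missing.
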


An integer $x\in\beta(\la)$ is called \emph{$t$-maximal} in $\beta(\la)$ if $x+t\notin\beta(\la)$. For each $t$-core partition $\la$ and $1\leq i \leq t-1$, let  $n_i(\la)$ be the number of integers $x$ satisfying $x\in \beta(\la)$ and $x  \equiv i \ (\text{mod}\ t)$. Then by Lemma \ref{betaset}(2) we know $t(n_i(\la)-1)+i$ is $t$-maximal in $\beta(\la)$ if  $i\in \beta(\la)$, and $n_i(\la)=0$ if  $i\notin \beta(\la)$. Furthermore, 
$$
\beta(\la)=\bigcup_{i=1}^{t-1}\, \bigcup_{j=0}^{n_i(\la)-1}\{jt+i\}.
$$
Therefore $|\beta(\la)|=\sum_{i=1}^{t-1}n_i(\la)$ and by \eqref{eq:2.1} we obtain
\begin{equation}\label{eq:size_formula}
|\la|=\sum_{i=1}^{t-1}\left(in_i(\la)+t\binom{n_i(\la)}{2}\right)-\binom{\sum_{i=1}^{t-1}n_i(\la)}{2}.
\end{equation}

\begin{ex}
Let $\lambda=(7,2,2)$. Then $\beta(\lambda)=\{9,3,2\}$.  By Lemma \ref{betaset}(2) we know that $\la$ is a $(6, 7)$-core partition. Also, $\la$ is \textit{not} a partition with distinct parts. We can check that  $3,2 \in \beta(\lambda)$ and $3-2=1$.
Let $t=6$. Then $\displaystyle 9,2 \in \beta(\lambda)$ are $t$-maximal in $\beta(\la)$ while $3 \in \beta(\la)$ is not. We have
$n_2(\la)=1$, $n_3(\la)=2$,  and $n_i(\la)=0$ otherwise. In this case \eqref{eq:size_formula} holds since $$7+2+2=2 \cdot 1+6\cdot \binom{1}{2}+3\cdot 2+6\cdot \binom{2}{2}- \binom{1+2}{2}.$$
\end{ex}

\section{The largest size of $(t,mt+1)$-core partitions with distinct parts} \label{sec:mt+1}

Let $S^+_{m,t}$ be the set of $(t, mt + 1)$-core partitions with distinct parts, and $
\mathcal{C}^+_{m,t}$  be the set of all sequences $ (x_1,x_2,\ldots,x_{t-1})\in\mathbb{N}^{t-1}$ with $0\leq x_i \leq m\ (1 \leq i \leq t-1)$ and $x_ix_{i+1}=0 \ ( 1 \leq i \leq t-2)$.
For each $\la \in S^+_{m,t}$, let $\psi(\la):=(n_1(\la),n_2(\la),\ldots,n_{t-1}(\la))$. 
\begin{theorem}\label{th:bijection}
The map $\psi$ provides a bijection between the sets $ S^+_{m,t}$ and 
$\mathcal{C}^+_{m,t}$.
\end{theorem}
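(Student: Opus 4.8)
The plan is to read each of the three defining conditions of $S^+_{m,t}$ directly off the tuple $\psi(\la)=(n_1(\la),\dots,n_{t-1}(\la))$, using the description $\beta(\la)=\bigcup_{i=1}^{t-1}\bigcup_{j=0}^{n_i(\la)-1}\{jt+i\}$ established above. That identity already shows that a $t$-core partition is encoded faithfully by its tuple $(n_1,\dots,n_{t-1})\in\mathbb{N}^{t-1}$, and that any such tuple arises from a $t$-core; so the content of the theorem is that the two remaining conditions — distinct parts and being an $(mt+1)$-core — carve out exactly the constraints $0\le n_i\le m$ and $n_in_{i+1}=0$ defining $\mathcal{C}^+_{m,t}$. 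I would therefore organise the proof as: (i) distinct parts $\Longleftrightarrow$ $n_in_{i+1}=0$; (ii) under (i), being an $(mt+1)$-core $\Longleftrightarrow$ $n_i\le m$ for all $i$; (iii) assemble these into injectivity and surjectivity of $\psi$.

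For step (i) I would invoke Lemma~\ref{distinct}: $\la$ has distinct parts iff no two elements of $\beta(\la)$ differ by $1$. Since $t\ge 2$ and every element of $\beta(\la)$ is $\not\equiv 0\pmod t$, a short residue computation shows that the only possible pair of consecutive integers in $\beta(\la)$ has the form $jt+i,\ jt+(i+1)$ with $1\le i\le t-2$. Because the elements present in residue class $i$ are exactly the initial segment $\{0,1,\dots,n_i-1\}t+i$, such a pair exists for some $j$ precisely when $n_i\ge 1$ and $n_{i+1}\ge 1$. Hence distinct parts is equivalent to $n_in_{i+1}=0$ for all $1\le i\le t-2$.

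Step (ii) is where the real work lies. One direction is immediate: if $n_i\le m$ for all $i$, then every element of $\beta(\la)$ is at most $(m-1)t+(t-1)=mt-1<mt+1$, so the abacus condition of Lemma~\ref{betaset}(2) for $mt+1$ is vacuously satisfied and $\la$ is an $(mt+1)$-core. The converse is the main obstacle, and it is here that the distinct-parts hypothesis must be used, since the $(mt+1)$-core condition on its own only yields the weaker bound $n_i\le n_{i-1}+m$. Concretely, suppose some $n_i\ge m+1$ and consider the $t$-maximal element $x=(n_i-1)t+i\ge mt+i\ge mt+1$. The abacus condition forces $x-(mt+1)\in\beta(\la)$. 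If $i=1$ this element is a nonnegative multiple of $t$, which can never lie in $\beta(\la)$ — a contradiction. If $i\ge 2$ it equals $(n_i-1-m)t+(i-1)$, forcing $n_{i-1}\ge n_i-m\ge 1$; but $n_i\ge 1$ as well, contradicting $n_{i-1}n_i=0$ from step (i). Thus $n_i\le m$ for every $i$.

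Finally, for step (iii), injectivity of $\psi$ is immediate since $\la$ is determined by $\beta(\la)$, which in turn is determined by $(n_1,\dots,n_{t-1})$. For surjectivity I would exhibit the inverse explicitly: given $(x_1,\dots,x_{t-1})\in\mathcal{C}^+_{m,t}$, set $B=\bigcup_{i=1}^{t-1}\bigcup_{j=0}^{x_i-1}\{jt+i\}$, a finite set of distinct positive integers, and let $\la$ be the unique partition with $\beta(\la)=B$. By construction $n_i(\la)=x_i$, and steps (i)--(ii) run in reverse: $B$ is closed under subtracting $t$ so $\la$ is a $t$-core, all elements of $B$ are $<mt+1$ so $\la$ is an $(mt+1)$-core, and $x_ix_{i+1}=0$ gives distinct parts. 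Hence $\la\in S^+_{m,t}$ with $\psi(\la)=(x_1,\dots,x_{t-1})$, completing the bijection.
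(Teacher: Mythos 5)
Your proof is correct and takes essentially the same route as the paper: both directions are read off the $\beta$-set via Lemma~\ref{betaset}(2) and Lemma~\ref{distinct}, and surjectivity is established by the same explicit inverse $\beta(\la)=\bigcup_i\{jt+i: 0\le j\le x_i-1\}$. The only cosmetic difference is in the bound $n_i\le m$: the paper shows directly that any $x\ge mt$ in $\beta(\la)$ forces either $0$ or the consecutive pair $x-mt,\ x-(mt+1)$ into $\beta(\la)$, while you phrase the identical contradiction through the $t$-maximal element of a residue class with $n_i\ge m+1$.
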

\begin{proof} 
Let $\la \in S^+_{m,t}$. By Lemma \ref{betaset}(2)  we have $mt,mt+1\notin \beta(\lambda)$.   If $x\in\beta(\lambda)$ with $x\geq mt+2$, then $x-mt,x-(mt+1)\in \beta(\lambda)$ by Lemma \ref{betaset}(2). But by Lemma \ref{distinct} this is impossible since $\lambda$ is a partition with distinct parts. Then, we know  that $x\notin \beta(\lambda)$ and thus $\beta(\lambda)$ must be a subset of  $\{ 1,2,\ldots,mt-1\}$. Therefore, $0\leq n_i(\la) \leq m$ for any $1\leq i\leq t-1$. Furthermore, by Lemma \ref{distinct} we have $i$ and $i+1$ can not be in $\beta(\la)$ at the same time, which means that $n_i(\la)n_{i+1}(\la)=0$ for any $1\leq i\leq t-2$. By the above arguments we have $\psi(\la)\in \mathcal{C}^+_{m,t}$ for any $\la \in S^+_{m,t}$. 

On the other hand, for any $(x_1,x_2,\ldots,x_{t-1})\in \mathcal{C}^+_{m,t}$, let $\la$ be the partition with the $\beta$-set 
$$
\beta(\la)= \bigcup\limits_{i=1}^{t-1}\{ k_it+i: 0\leq k_i\leq x_i-1 \}.
$$
Then, Lemmas \ref{betaset}(2) and \ref{distinct} imply $\la \in S^+_{m,t}$ and $\psi(\la)=(x_1,x_2,\ldots,x_{t-1})$. Therefore, $\psi$ is a bijection between the sets $S^+_{m,t}$ and 
$\mathcal{C}^+_{m,t}$.
\end{proof}

Now we are ready to give the proof of Theorem \ref{th:main}.
\begin{proof}[ Proof of Theorem \ref{th:main}]
Suppose that $\la$ is a partition in $S^+_{m,t}$ with the largest size.

\textbf{Step 1.}  We claim that $n_i(\la)=0$ or $m$ for  $1\leq i \leq t-1$. Otherwise, suppose that there exists some~$i$ such that $1\leq n_i(\la) \leq m-1$. We define
$$
\la'=\psi^{-1}(n_1(\la),n_2(\la),\ldots,n_{i-1}(\la),n_i(\la)+1,n_{i+1}(\la),\ldots,n_{t-1}(\la))
$$ 
and 
$$
\la''=\psi^{-1}(n_1(\la),n_2(\la),\ldots,n_{i-1}(\la),n_i(\la)-1,n_{i+1}(\la),\ldots,n_{t-1}(\la)).
$$
Then,  Theorem \ref{th:bijection} implies that $\la'$ and $\la''$
are also in $S^+_{m,t}$. By \eqref{eq:size_formula} or \eqref{eq:2.1}  we have
$$
| \la' |-| \la |= tn_i(\la)+i-\sum_{i=1}^{t-1}n_i(\la)
$$ 
and
$$
| \la |-| \la'' |= t(n_i(\la)-1)+i-(\sum_{i=1}^{t-1}n_i(\la)-1).
$$ 
Putting these two equalities together yields
\begin{eqnarray*}
2| \la |   =  | \la'|+|
\la''|-(t-1)
< | \la'|+| \la''|.
\end{eqnarray*}
This contradicts the assumption that $\la$ is a partition in $S^+_{m,t}$ with the largest size. Then 
the claim holds.

\textbf{Step 2.} By Step $1$ we have $n_i=0$ or $m$ for any $1\leq i \leq t-1$. Let 
$$
I=\{  1\leq i \leq t-1: \  n_i(\la)= m   \}.
$$
Since $\la$ has the largest size in $S^+_{m,t}$, we know that  $I=\{ t+1-2j:\ 1\leq j \leq r    \}$  for some integer $1\leq r \leq \lfloor   t/2\rfloor $. Let $\la^k$ be the partition in $S^+_{m,t}$ such that $n_i(\la^k)=m$ if $i=t+1-2j $ for some $1\leq j \leq k$; and $n_i(\la^k)=0$ otherwise. Then, $\la \in \{\la^r:\  1\leq r \leq \lfloor   t/2\rfloor  \}$. 

\textbf{Step 3.} By \eqref{eq:size_formula} the size of $\la^r$ is 
\begin{align*}
|\la^r|
&= \sum_{j=1}^{r}\left(m(t+1-2j)+t\binom{m}{2}\right)-\binom{mr}{2}
\\&=
-r^2\cdot  (m^2+2m)/{2}+r\cdot  (m^2t+mt+m)/{2}
\\&= 
-  \frac{m^2+2m}{2}\cdot(r-  \alpha_{m,t}(1))^2+  \frac{m^2+2m}{2}\cdot  \alpha^2_{m,t}(1)
.
\end{align*}

Case $1$: When $t \equiv 1 \pmod 2$ and $t\leq m+3$, we have 
$$
  \frac{t-1}{2} \leq   \frac{t-1}{2}+  \frac{m-t+3}{2(m+2)}\leq    \alpha_{m,t}(1)<  \frac{t-1}{2}+  \frac12
.
$$
This means that $\{  \alpha_{m,t}(1)\}<  1/2$ since $  (t-1)/{2}\in \mathbb{N}$. Notice that $1\leq r \leq   (t-1)/{2}$. Then $\la^r$ has the largest size in $S^+_{m,t}$ if and only if 
$$r=  \frac{t-1}{2}=\lfloor   \alpha_{m,t}(1)\rfloor .$$

Case $2$: When $t \equiv 0 \pmod 2$ or $t> m+3$, we have 
$$
  \frac{1}{2}<  \alpha_{m,t}(1)=  \frac{t}{2}-  \frac{t-1}{2(m+2)}=  \frac{t-1}{2}+  \frac{m-t+3}{2(m+2)}< \lfloor   \frac{t}{2}\rfloor 
.
$$
Notice that $1\leq r \leq \lfloor   t/2\rfloor $. Then $\la^r$ has the largest size in $S^+_{m,t}$ if and only if 
\begin{align*}
r= \begin{cases}  \lfloor   \alpha_{m,t}(1)\rfloor , \quad &\ \text{when}\  \{  \alpha_{m,t}(1)\}<  1/2; \\ \lfloor   \alpha_{m,t}(1)\rfloor  \ \text{or}\ \lfloor   \alpha_{m,t}(1)\rfloor +1, \quad &\   \text{when}\ \{  \alpha_{m,t}(1)\}=  1/2; \\
  \lfloor   \alpha_{m,t}(1)\rfloor +1, \quad &\  \text{when}\  \{  \alpha_{m,t}(1)\}>  1/2.
\end{cases}
\end{align*}

Finally Theorem \ref{th:main} holds by the above two cases.
\end{proof}

\section{Largest size of $(t,mt-1)$-core partitions with distinct parts} \label{sec:mt-1}

Let $S^-_{m,t}$ be the set of $(t, mt - 1)$-core partitions with distinct parts,
and $
\mathcal{C}^-_{m,t}$  be the set of all sequences  $ (x_1,x_2,\ldots,x_{t-1})\in\mathbb{N}^{t-1}$ with $0\leq x_i \leq m\ ( 1 \leq i \leq t-2)$, $x_ix_{i+1}=0 \ ( 1 \leq i \leq t-2)$ and $ 0\leq x_{t-1}\leq m-1$.

For each $\la \in S^-_{m,t}$, let $\varphi(\la):=(n_1(\la),n_2(\la),\ldots,n_{t-1}(\la))$. 
\begin{theorem}\label{th:bijection*}
The map $\varphi$ provides a bijection between the sets $ S^-_{m,t}$ and 
$\mathcal{C}^-_{m,t}$.
\end{theorem}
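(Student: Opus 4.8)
The plan is to mirror the proof of Theorem~\ref{th:bijection} almost verbatim, adjusting only the constraints that distinguish $S^-_{m,t}$ from $S^+_{m,t}$. As before, I must establish two directions: that $\varphi(\la)\in\mathcal{C}^-_{m,t}$ for every $\la\in S^-_{m,t}$, and that every sequence in $\mathcal{C}^-_{m,t}$ arises from a unique such partition. The only structural change is that the modulus $mt-1$ plays the role previously held by $mt+1$, so I expect the boundedness argument on the $n_i(\la)$ to behave slightly differently near the top residue class $i=t-1$, which is exactly where the asymmetric constraint $0\leq x_{t-1}\leq m-1$ will surface.

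First I would take $\la\in S^-_{m,t}$ and apply Lemma~\ref{betaset}(2) with the two moduli $t$ and $mt-1$. For the distinct-parts constraint, I would argue as in Theorem~\ref{th:bijection}: if $x\in\beta(\la)$ with $x\geq mt$, then both $x-t\in\beta(\la)$ (since $x\geq mt>t$) and $x-(mt-1)\in\beta(\la)$ (since $x\geq mt-1$); but these two elements differ by $(x-t)-(x-(mt-1))=(m-1)t+1-t\cdots$, so I would instead produce a forbidden pair differing by $1$. Concretely, from $x\geq mt-1$ in $\beta(\la)$ one gets $x-(mt-1)\in\beta(\la)$, and pairing this with a $t$-translate yields two elements of $\beta(\la)$ differing by $1$, contradicting Lemma~\ref{distinct}. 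This forces $\beta(\la)\subseteq\{1,2,\ldots,mt-2\}$, whence $0\leq n_i(\la)\leq m$ in general, but $n_{t-1}(\la)\leq m-1$ because the largest element in residue class $t-1$ below $mt-1$ is $(m-1)t+(t-1)=mt-1\notin\beta(\la)$, leaving only $k_{t-1}\leq m-2$ multiples available. I would then recover $n_i(\la)n_{i+1}(\la)=0$ for $1\leq i\leq t-2$ exactly as before, via Lemma~\ref{distinct}.

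For the reverse direction, given $(x_1,\ldots,x_{t-1})\in\mathcal{C}^-_{m,t}$, I would define $\la$ through the same $\beta$-set formula
\begin{align*}
\beta(\la)=\bigcup_{i=1}^{t-1}\{k_i t+i:\ 0\leq k_i\leq x_i-1\}
\end{align*}
and verify that $\la\in S^-_{m,t}$ by checking the abacus condition for both $t$ and $mt-1$ together with the no-consecutive-elements condition. The $t$-core property is immediate from the construction; the $(mt-1)$-core property and the distinct-parts property follow from the bounds $x_i\leq m$ and $x_{t-1}\leq m-1$ ensuring every element of $\beta(\la)$ stays below $mt-1$, so no element exceeds $mt-2$ and the abacus condition for modulus $mt-1$ holds vacuously. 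Finally $\varphi(\la)=(x_1,\ldots,x_{t-1})$ by inspection, and injectivity follows since $\la$ is determined by its $\beta$-set.

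The main obstacle, and the only genuinely new point compared with Theorem~\ref{th:bijection}, is pinning down the correct upper bound $n_{t-1}(\la)\leq m-1$ rather than $m$: one must check carefully that the element $mt-1$ (the top of residue class $t-1$) is excluded by the $(mt-1)$-core condition and the distinct-parts condition acting together, and that this is the sole asymmetry. I would verify this by a direct case analysis on whether $mt-1$ or $mt-2$ could lie in $\beta(\la)$, using Lemma~\ref{distinct} to rule out the dangerous consecutive pair. Everything else is a routine adaptation of the earlier argument.
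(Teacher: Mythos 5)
Your proposal is correct and follows essentially the same route as the paper: use the abacus condition for both moduli to force $\beta(\la)\subseteq\{1,\ldots,mt-2\}$ (the key pair being $x-mt$ and $x-(mt-1)$, which differ by $1$ and violate Lemma~\ref{distinct}), deduce the bounds $n_i(\la)\le m$ and $n_{t-1}(\la)\le m-1$ together with $n_i(\la)n_{i+1}(\la)=0$, and invert via the explicit $\beta$-set. Your first pairing of $x-t$ with $x-(mt-1)$ gives a difference of $(m-1)t-1$ rather than $1$, but you correctly self-correct to the $m$-fold $t$-translate $x-mt$, which is exactly the paper's argument.
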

\begin{proof} 
Let $\la \in S^-_{m,t}$. By Lemma \ref{betaset}(2)  we have $mt,mt-1\notin \beta(\lambda)$.  If $x\in\beta(\lambda)$ with $x\geq mt+1$,  we know $x-mt,x-(mt-1)\in \beta(\lambda)$ by Lemma \ref{betaset}(2). But by Lemma \ref{distinct} this is impossible since $\lambda$ is a partition with distinct parts. Then, we know  $x\notin \beta(\lambda)$ and thus $\beta(\lambda)$ must be a subset of  $\{ 1,2,\ldots,mt-2\}$. Therefore, $0\leq n_i(\la) \leq m$ for any $1\leq i\leq t-2$ and $0\leq n_{t-1}(\la) \leq m-1$. Furthermore, by Lemma \ref{distinct} we have $i$ and $i+1$ can not be in $\beta(\la)$ at the same time, which means that $n_i(\la)n_{i+1}(\la)=0$ for any $1\leq i\leq t-2$. By the above arguments we have $\varphi(\la)\in \mathcal{C}^-_{m,t}$ for any $\la \in S^-_{m,t}$. 

On the other hand, for any $(x_1,x_2,\ldots,x_{t-1})\in \mathcal{C}^-_{m,t}$, let $\la$ be the partition with the $\beta$-set 
$$
\beta(\la)= \bigcup\limits_{i=1}^{t-1}\{ k_it+i: 0\leq k_i\leq x_i-1 \}.
$$
Then Lemmas \ref{betaset}(2) and \ref{distinct} imply $\la \in S^-_{m,t}$ and $\varphi(\la)=(x_1,x_2,\ldots,x_{t-1})$. Therefore, $\varphi$ is a bijection between the sets $S^-_{m,t}$ and 
$\mathcal{C}^-_{m,t}$.
\end{proof}

Now we are ready to give the proof of Theorem \ref{th:main*}.

\begin{proof}[ Proof of Theorem \ref{th:main*}]
Suppose that $\la$ is a partition in $S^-_{m,t}$ with the largest size.

\textbf{Step 1.}  We claim that $n_{t-1}(\la)=0$ or $m-1$, and $n_i(\la)=0$ or $m$ for $1\leq i \leq t-2$. Otherwise, assume that $1\leq n_{t-1}(\la) \leq m-2$ (in this case, let $i=t-1$) or there exists some $1\leq i \leq t-2$ such that $1\leq n_i(\la) \leq m-1$.    We define
$$
\la'=\varphi^{-1}(n_1(\la),n_2(\la),\ldots,n_{i-1}(\la),n_i(\la)+1,n_{i+1}(\la),\ldots,n_{t-1}(\la))
$$ 
and 
$$
\la''=\varphi^{-1}(n_1(\la),n_2(\la),\ldots,n_{i-1}(\la),n_i(\la)-1,n_{i+1}(\la),\ldots,n_{t-1}(\la)).
$$
Then, Theorem \ref{th:bijection*} implies that $\la'$ and $\la''$
are also in $S^-_{m,t}$. By \eqref{eq:size_formula} or \eqref{eq:2.1}  we have
\begin{eqnarray*}
2| \la |   =  | \la'|+|
\la''|-(t-1)
< | \la'|+| \la''|.
\end{eqnarray*}
This contradicts the assumption that $\la$ is a partition in $S^-_{m,t}$ with the largest size. Then 
the claim holds.

\textbf{Step 2.} By Step $1$ we obtain $n_{t-1}(\la)=0$ or $m-1$, and $n_i(\la)=0$ or $m$ for  $1\leq i \leq t-2$. Let $$I=\{  1\leq i \leq t-1: \  n_i(\la)\neq 0  \}.$$  Since $\la$ has the largest size in $S^-_{m,t}$, we know that  $I=\{ t+1-2j:\ 1\leq j \leq r    \}$ for some $1\leq r \leq \lfloor   t/2\rfloor $ or $\{ t-2j:\ 1\leq j \leq s    \}$  for some $1\leq s \leq \lfloor   (t-1)/{2}\rfloor $. Let $\la^r\ (1\leq r \leq \lfloor   t/2\rfloor )$ and $\mu^s\ (1\leq s \leq \lfloor   (t-1)/{2}\rfloor )$ be the partitions such that 
\begin{align*}
&n_{i}(\la^r)=
\begin{cases}
m, \quad &\text{if}\ i=t+1-2j  \ \text{for some}\ 2\leq j \leq r;
\\
m-1, \quad &\text{if}\ i=t-1;
\\
0, \ &\text{otherwise}.
\end{cases} 
\\&n_{i}(\mu^s)=
\begin{cases}
m, \quad &\text{if}\ i=t-2j  \ \text{for some}\ 1\leq j \leq s;
\\
0, \ &\text{otherwise}.
\end{cases} 
\end{align*}

Then $\la \in \{\la^r:\  1\leq r \leq \lfloor   t/2\rfloor  \} \cup \{\mu^s:\  1\leq s \leq \lfloor   (t-1)/{2}\rfloor  \}$. 

\textbf{Step 3.}
When $t=2$, we have $\lfloor   t/2\rfloor =1$ and $\lfloor   (t-1)/{2}\rfloor =0$, therefore by Step $2$ the partition $\la^1$ is the only partition  in $S^-_{m,t}$ with the largest size. It is easy to check that $\la^1=(m-1,m-2,m-3,\ldots,3,2,1)$ and $|\la^1|=\binom{m}{2}$.

When $t=3$,  by Step $2$ we obtain $\la \in \{\la^1, \mu^1\}$. But in this case $|\la^1|=m^2-m<m^2=|\mu^1|$. Then~$\mu^1$ is the only partition  in $S^-_{m,t}$ with the largest size $m^2$. It is easy to check that $\mu^1=(2m-1,2m-3,2m-5,\ldots,5,3,1)$. 

\textbf{Step 4.}
Next we assume that $t\geq 4$ and $m\geq 2$.
By \eqref{eq:size_formula}, the size of $\la^r$ is 
\begin{align} \label{eq: la_r}
\displaystyle 
|\la^r|
= & (m-1)(t-1)+t\binom{m-1}{2}+\sum_{j=2}^{r}\left(m(t+1-2j)+t\binom{m}{2}\right)-\binom{mr-1}{2} \nonumber
\\&=
-r^2\cdot  (m^2+2m)/{2}+r\cdot  (m^2t+mt+3m)/{2}-mt
\\&= 
-  \frac{m^2+2m}{2}\cdot(r-  \alpha_{m,t}(3))^2+  \frac{m^2+2m}{2}\cdot \alpha^2_{m,t}(3)-mt \nonumber
.
\end{align}
When $t \equiv 1 \pmod 2$ and $t\leq m+5$, we have 
$$
  \frac{t-1}{2} \leq   \frac{t-1}{2}+  \frac{m-t+5}{2(m+2)}\leq    \alpha_{m,t}(3)=  \frac{t}{2}-  \frac{t-3}{2(m+2)}<  \frac{t}{2}
.
$$
When $t \equiv 0 \pmod 2$ or $t> m+5$, we have 
$$
1<  \alpha_{m,t}(3)=  \frac{t}{2}-  \frac{t-3}{2(m+2)}=  \frac{t-1}{2}+  \frac{m-t+3}{2(m+2)}< \lfloor   \frac{t}{2}\rfloor 
.
$$
Then, in each case, $\la^r$ has the largest size in $ \{\la^r:\  1\leq r \leq \lfloor   t/2\rfloor  \} $ if and only if 
\begin{align*}
\displaystyle
r= \begin{cases}  \lfloor   \alpha_{m,t}(3)\rfloor , \quad &\ \text{when}\  \{  \alpha_{m,t}(3)\}<  1/2; \\ \lfloor   \alpha_{m,t}(3)\rfloor  \ \text{or}\ \lfloor   \alpha_{m,t}(3)\rfloor +1, \quad &\   \text{when}\ \{  \alpha_{m,t}(3)\}=  1/2; \\
  \lfloor   \alpha_{m,t}(3)\rfloor +1, \quad &\  \text{when}\  \{  \alpha_{m,t}(3)\}>  1/2.
\end{cases}
\end{align*}

Also, by \eqref{eq:size_formula} the size of $\mu^s$ is 
\begin{align} \label{eq: mu_s}
|\mu^s|
&= \sum_{j=1}^{s}\left(m(t-2j)+t\binom{m}{2}\right)-\binom{ms}{2} \nonumber
\\&=
-s^2\cdot  (m^2+2m)/{2}+s\cdot  (m^2t+mt-m)/{2}
\\&= 
-  \frac{m^2+2m}{2}\cdot(s-  \alpha_{m,t}(-1))^2+  \frac{m^2+2m}{2}\cdot \alpha^2_{m,t}(-1)
\nonumber.
\end{align}

Case $1$: When $t \equiv 1 \pmod 2$ and $t\leq m+1$, we have 
$$
    \alpha_{m,t}(-1)=  \frac{t-1}{2}+  \frac{m-t+1}{2(m+2)}\geq  \frac{t-1}{2}
.
$$
 Notice that $1\leq s \leq   (t-1)/2$. Then, $\mu^s$  has the largest size in $ \{\mu^s:\  1\leq s \leq \lfloor    (t-1)/2\rfloor  \} $ if and only if 
$$s=  \frac{t-1}{2}=\lfloor   \frac{t-1}{2}\rfloor .$$

Case $2$: When $t \equiv 1 \pmod 2$ and $t> m+1$, we have 
$$
1<  \alpha_{m,t}(-1)<  \frac{t-1}{2}=\lfloor   \frac{t-1}{2}\rfloor 
.
$$
Then, $\mu^s$  has the largest size in $ \{\mu^s:\  1\leq s \leq \lfloor    (t-1)/2\rfloor  \} $  if and only if 
\begin{align*}
s= \begin{cases}  \lfloor   \alpha_{m,t}(-1)\rfloor , \quad &\ \text{when}\  \{  \alpha_{m,t}(-1)\}<  1/2; \\ \lfloor   \alpha_{m,t}(-1)\rfloor  \ \text{or}\ \lfloor   \alpha_{m,t}(-1)\rfloor +1, \quad &\   \text{when}\ \{  \alpha_{m,t}(-1)\}=  1/2; \\
  \lfloor   \alpha_{m,t}(-1)\rfloor +1, \quad &\  \text{when}\  \{  \alpha_{m,t}(-1)\}>  1/2.
\end{cases}
\end{align*}

Case $3$: When $t \equiv 0 \pmod 2$ and $t\leq 2m+3$, we have 
$$
    \alpha_{m,t}(-1)=  \frac{t-2}{2}+  \frac{2m-t+3}{2(m+2)}\geq  \frac{t-2}{2}
.
$$
 Notice that $1\leq s \leq    (t-2)/2$. Then $\mu^s$  has the largest size in $ \{\mu^s:\  1\leq s \leq \lfloor    (t-1)/2\rfloor  \} $ if and only if 
$$s=  \frac{t-2}{2}=\lfloor   \frac{t-1}{2}\rfloor .$$

Case $4$: When $t \equiv 0 \pmod 2$ and $t> 2m+3$, we have 
$$
1<  \alpha_{m,t}(-1)<  \frac{t-2}{2}=\lfloor   \frac{t-1}{2}\rfloor 
.
$$
Then $\mu^s$  has the largest size in $ \{\mu^s:\  1\leq s \leq \lfloor    (t-1)/2\rfloor  \} $  if and only if 
\begin{align*}
s= \begin{cases}  \lfloor   \alpha_{m,t}(-1)\rfloor , \quad &\ \text{when}\  \{  \alpha_{m,t}(-1)\}<  1/2; \\ \lfloor   \alpha_{m,t}(-1)\rfloor  \ \text{or}\ \lfloor   \alpha_{m,t}(-1)\rfloor +1, \quad &\   \text{when}\ \{  \alpha_{m,t}(-1)\}=  1/2; \\
  \lfloor   \alpha_{m,t}(-1)\rfloor +1, \quad &\  \text{when}\  \{  \alpha_{m,t}(-1)\}>  1/2.
\end{cases}
\end{align*}

\textbf{Step 5.} By Step $4$, we obtain the following results for $t\geq 4$ and $m\geq 2$.

Case $1$: When (i) $t \equiv 1 \pmod 2$ and $t> m+1$; or (ii) $t \equiv 0 \pmod 2$ and $t> 2m+3$,  the largest size for partitions in $S^-_{m,t}$ is
\begin{align*}
\begin{cases} \max \left(  F(\lfloor   \alpha_{m,t}(3)\rfloor ), G(\lfloor   \alpha_{m,t}(-1)\rfloor ) \right), \quad & \text{if}\  \{  \alpha_{m,t}(3)\}\leq  1/2 \ \text{and}\  \{  \alpha_{m,t}(-1)\}\leq  1/2; \\ \max \left(  F(\lfloor   \alpha_{m,t}(3)\rfloor ), G(\lfloor   \alpha_{m,t}(-1)\rfloor +1) \right), \quad & \text{if}\  \{  \alpha_{m,t}(3)\}\leq  1/2 \ \text{and}\  \{  \alpha_{m,t}(-1)\}>  1/2; \\
\max \left(  F(\lfloor   \alpha_{m,t}(3)\rfloor +1), G(\lfloor   \alpha_{m,t}(-1)\rfloor ) \right), \quad & \text{if}\  \{  \alpha_{m,t}(3)\}>  1/2 \ \text{and}\  \{  \alpha_{m,t}(-1)\}\leq  1/2;
\\
\max \left(  F(\lfloor   \alpha_{m,t}(3)\rfloor +1), G(\lfloor   \alpha_{m,t}(-1)\rfloor +1) \right), \quad & \text{if}\  \{  \alpha_{m,t}(3)\}>  1/2 \ \text{and}\  \{  \alpha_{m,t}(-1)\}>  1/2.
\end{cases}
\end{align*}

Case $2$: When (iii) $t \equiv 1 \pmod 2$ and $t\leq m+1$; or (iv) $t \equiv 0 \pmod 2$ and $t\leq 2m+3$,  the largest size  for partitions in $S^-_{m,t}$ is
\begin{align*}
\begin{cases} \max \left(  F(\lfloor   \alpha_{m,t}(3)\rfloor ), G(\lfloor   \frac{t-1}{2}\rfloor ) \right), \quad &\ \text{if}\  \{  \alpha_{m,t}(3)\}\leq  1/2; \\ \max \left(  F(\lfloor   \alpha_{m,t}(3)\rfloor +1), G(\lfloor   \frac{t-1}{2}\rfloor ) \right), \quad &\ \text{if}\  \{  \alpha_{m,t}(3)\}>  1/2.
\end{cases}
\end{align*}

It is easy to check that $t=2$ and $3$ can be included in the above Case $2$. Thus this step completes all the cases for the largest size of $(t, mt - 1)$-core partitions with distinct parts. 

\textbf{Step 6.} In this step we claim that the number of partitions in $S^-_{m,t}$ with the largest size is at most~$2$. Otherwise, assume that there exist at least $3$ partitions 
in $S^-_{m,t}$ with the largest size. There are two cases to be considered.

Case $1$: There exist some $1\leq r\leq \lfloor   t/2\rfloor -1$ and $1\leq s\leq \lfloor   (t-1)/2\rfloor $ such that $|\la^r|=|\la^{r+1}|=|\mu^s|$ equals the largest size in   $S^-_{m,t}$. Then $$   \alpha_{m,t}(3)= r+  \frac12 $$ since $|\la^r|=|\la^{r+1}|$.   Therefore
$$ 
r\leq   \alpha_{m,t}(-1)=   \alpha_{m,t}(3)-  \frac{2}{m+2}= r+  \frac12 -  \frac{2}{m+2}<r+  \frac12\leq \lfloor   \frac{t}{2}\rfloor -  \frac12.
$$
Then $s=r$ since $|\mu^s|$ is equal to the largest size in $S^-_{m,t}$. But by \eqref{eq: la_r} and \eqref{eq: mu_s}, $|\la^r|=|\mu^r|$ implies $t=2r$, which contradicts the inequality $r\leq \lfloor   t/2\rfloor -1$. 

Case $2$: There exist some $1\leq s\leq \lfloor   (t-1)/2\rfloor -1$ and $1\leq r\leq \lfloor   t/2\rfloor $ such that $|\mu^s|=|\mu^{s+1}|=|\la^r|$ equals the largest size in   $S^-_{m,t}$. Then $$   \alpha_{m,t}(-1)= s+  \frac12 $$ since $|\mu^s|=|\mu^{s+1}|$.   Therefore
$$ 
s+  \frac12<   \alpha_{m,t}(3)=   \alpha_{m,t}(-1)+  \frac{2}{m+2}= s+  \frac12 +  \frac{2}{m+2}\leq s+1 \leq \lfloor   (t-1)/2\rfloor .
$$
Then $r=s+1$ since $|\mu^s|$ is equal to the largest size in $S^-_{m,t}$. But by \eqref{eq: la_r} and \eqref{eq: mu_s}, $|\la^{s+1}|=|\mu^{s+1}|$ implies $t=2(s+1)$, which contradicts the inequality  $s\leq \lfloor   (t-1)/2\rfloor -1$. 

Then the claim holds by the contradictions in Cases $1$ and $2$. 

\textbf{Step 7.} 
Finally the proof is complete by Steps $5$ and $6$.  
\end{proof}

\section*{Acknowledgments}
The author is supported by Grant P2ZHP2\_171879 of the Swiss National Science Foundation.


\begin{thebibliography}{1}



\bibitem{tamd}
T. Amdeberhan, Theorems, problems and conjectures, available at: {\tt http://129.81.170.14/$\sim$tamdeberhan/conjectu res.pdf}.
 
 \bibitem{tamd1}
T. Amdeberhan and E. Leven, Multi-cores, posets, and lattice paths,  Adv. in Appl. Math. $71 (2015)$, $1-13$.

\bibitem{and}
J. Anderson, Partitions which are simultaneously $t_1$- and
$t_2$-core, Disc. Math. $248(2002),\ 237-243$.

\bibitem{AHJ}
D. Armstrong, C. R. H. Hanusa and B. Jones, Results and conjectures on simultaneous core partitions,
European J. Combin. $41 (2014),\ 205-220$.

\bibitem{BNY}
J. Baek, H. Nam and M. Yu, A bijective proof of Amdeberhan's conjecture on the number of $(s,s+2)$-core partitions with distinct parts, preprint; {\tt arXiv:1705.02691v2}.

\bibitem{berge} C. Berge, \emph{Principles of Combinatorics}, Mathematics in Science and Engineering Vol. $72$, Academic Press, New York, $1971$.


\bibitem{CHW}
W. Chen, H. Huang and L. Wang, Average size of a self-conjugate $s,t$-core partition,  Proc. Amer. Math. Soc. $144(4) (2016),\ 1391-1399$.

\bibitem{ford}
B. Ford, H. Mai, and L. Sze, Self-conjugate simultaneous $p$- and
$q$-core partitions and blocks of $A_n$, J. Number Theory
$129(4)(2009),\ 858-865.$




\bibitem{PJ}
P. Johnson, Lattice points and simultaneous core partitions, preprint; {\tt arXiv:1502.07934v2}.

\bibitem{KN}
W. J. Keith and R. Nath,  Partitions with prescribed hooksets, J.
Comb. Number Theory $3(1) (2011),\  39-50$.

\bibitem{Macdonald} I. G. Macdonald, \emph{Symmetric Functions and Hall Polynomials}, Oxford Mathematical Monographs, The Clarendon Press, Oxford University Press, New York, second edition, 1995. 

\bibitem{NS}
R. Nath and J. A. Sellers, A combinatorial proof of a relationship between largest $(2k-1,2k+1)$ and $(2k-1,2k,2k+1)$-cores, Electron. J. Combin. $23(1) (2016)$, Paper $1.13$.


\bibitem{NS2}
R. Nath and J. A. Sellers,
Abaci structures of $(s, ms\pm1) $-core partitions,  Electron. J. Combin. $24(1) (2016)$, Paper $1.5$.

\bibitem{N3}
R. Nath, Symmetry in largest $(s-1,s+1)$ cores, Integers $16 (2016)$, Paper No. $A18$.



\bibitem{ols}
J. Olsson and D. Stanton, Block inclusions and cores of partitions,
Aequationes Math. $74(1-2)(2007),\ 90-110$.

\bibitem{ec2} R. P. Stanley, \emph{Enumerative Combinatorics}, vol.~2, Cambridge University Press, New York/Cam\-bridge, 1999.

\bibitem{SZ}
R. P. Stanley and F. Zanello, The Catalan case of Armstrong's conjectures on simultaneous core partitions, SIAM J. Discrete Math. $29(1)(2015),\ 658-666$. 



\bibitem{Straub}
A. Straub, Core partitions into distinct parts and an analog of Euler's theorem, European J. Combin. $57 (2016),\ 40-49$.

\bibitem{Wang}
V. Wang, Simultaneous core partitions: parameterizations and sums,  Electron. J. Combin. $23(1) (2016)$, Paper $1.4$.




\bibitem{Xiong1}
H. Xiong, On the largest size of $(t,t+1,..., t+p)$-core partitions, Disc. Math. $339(1) (2016)$,  $308-317$.

\bibitem{Xiong} H. Xiong, Core partitions with distinct parts, preprint; {\tt arXiv:1508.07918v2}.

\bibitem{YQJZ}
S. Yan, G. Qin, Z. Jin and R. Zhou, On $(2k+ 1, 2k+ 3) $-core partitions with distinct parts,  Discrete Math. $340(6) (2017),\  1191-1202$.

\bibitem{YZZ}
J. Yang, M. Zhong and R. Zhou, On the enumeration of $(s,s+1,s+2)$-core partitions, European J. Combin. $49 (2015),\ 203-217$.

\bibitem{ZZ}
A. Zaleski and D. Zeilberger, Explicit (polynomial!) expressions for the expectation, variance and higher moments of the size of a $(2n+ 1, 2n+ 3)$-core partition with distinct parts, preprint; {\tt arXiv:1611.05775}.

\bibitem{Za}
A. Zaleski,
Explicit expressions for the moments of the size of an $(s,s+1)$-core partition with distinct parts,
Adv. in Appl. Math. $84 (2017),\ 1-7$. 

\bibitem{Za2}
A. Zaleski,
Explicit expressions for the moments of the size of an $(n, dn-1)$-core partition with distinct parts,
preprint; {\tt arXiv:1702.05634}. 
\end{thebibliography}
\end{document}